\def\eu{\mathfrak}
\def\ma{\mathbb}
\def\mc{\mathcal}
\def\p{{\mc P}_{\infty}}
\def\f{{\ma F}_q^{\ast}}
\def\F{{\ma F}_q}
\def\pK{\eu p}
\def\pL{\eu P}
\def\fin{\hfill\qed\bigskip}
\def\*#1{#1^*}
\def\lra{\longrightarrow}
\def\Ku#1#2{k\big(\sqrt[#1]{#2}\big)}
\def\produ{\big\{(\alpha_1, \ldots, \alpha_t)\in \prod_{i=1}^t
K_{{\mathfrak p}_i}\mid \prod_{i=1}^t \N_{{K_{{\mathfrak p}_i}
/{\mathbb R}}}(\alpha_i)\in {{\mathbb R}^*}^2\big\}}
\def\producto#1{\big\{(\alpha_1, \ldots, \alpha_t)\in \prod_{i=1}^t
K_{{\mathfrak p}_i}\mid \prod_{i=1}^t \N_{{K_{{\mathfrak p}_i}
/k_{\infty}}}(\alpha_i)\in {k_{\infty}^{*#1}\big\}}}
\def\A{\Delta\times\prod_{\pK\nmid\infty}U_{\pK}}
\def\B{\prod_{\pK|\infty} K_{\pK}^*\times\prod_{\pK\nmid\infty}U_{\pK}}
\newcommand{\Gal}{\operatorname{Gal}}
\newcommand{\N}{\operatorname{N}}
\newcommand{\rest}{\operatorname{rest}}
\newcounter{bean}
\def\las{\begin{list}
	{{\rm {(\arabic{bean})}}}{\usecounter{bean}
\setlength{\labelwidth}{0.8in}
\setlength{\labelsep}{0.3cm}
\setlength{\leftmargin}{1cm}}}
\numberwithin{equation}{section}
\newtheorem{theorem}{Theorem}[section]
\newtheorem{proposition}[theorem]{Proposition}
\newtheorem{lemma}[theorem]{Lemma}
\newtheorem{examples}[theorem]{Examples}
\newtheorem{remark}[theorem]{Remark}
\newtheorem{definition}[theorem]{Definition}
\newtheorem{corollary}[theorem]{Corollary}
\title[A note on extended genus fields of Kummer extensions]
{A note on extended genus fields of Kummer extensions of
rational function fields}
\author[M. Rzedowski]{Martha Rzedowski--Calder\'on}
\address{Departamento de Control Autom\'atico\\
Centro de Investigaci\'on y de Estudios Avanzados del I.P.N.}
\email{mrzedowski@ctrl.cinvestav.mx}
\author[G. Villa]{Gabriel Villa--Salvador}
\address{Departamento de Control Autom\'atico\\
Centro de Investigaci\'on y de Estudios Avanzados del I.P.N.}
\email{gvillasalvador@gmail.com, gvilla@ctrl.cinvestav.mx}
\subjclass[2010]{Primary 11R58; Secondary 11R29}
\keywords{Global fields, extended genus fields, Kummer extensions}
\date{March 1st., 2024}
\begin{document}

\begin{abstract}

We consider the generalization of the extended genus
field of a prime degree 
cyclic Kummer extension of a rational function field
obtained by R. Clement in 1992 to general Kummer extensions.
We observe that the same approach of Clement works in general.

\end{abstract}

\maketitle

\section{Introduction}\label{S1}

The first one to consider genus fields, or more precisely
extended genus fields,
of function fields was R. Clement
in \cite{Cle92}. She considered a cyclic Kummer extension $K$
of $k:={\mathbb F}_q(T)$, a global rational function field of prime
degree $l$, where $l|q-1$. She used the ideas of H. Hasse developed
in \cite{Has51}, where he considered a quadratic extension $K$ of
the field of rational numbers ${\mathbb Q}$.

In the case of number fields, both, the genus and the extended
genus fields of a number field $K$ are canonically defined as the
maximal extension of $K$ of the form $KF$ contained in the
Hilbert class and in the extended Hilbert class field, respectively,
where $F$ is an abelian extension of ${\mathbb Q}$. The Hilbert
class field (resp. extended Hilbert class field) of $K$ is defined
as the maximal unramified (resp. maximal 
unramified at the finite primes) abelian extension of $K$. 

In the case of global function fields
the situation is quite different. There are several reasonable possible 
definitions of Hilbert class field of a function field $K$. The main
reason is that, in the number field case, the Hilbert class
field is a finite extension of $K$. In the case of a function field $K$,
the extensions of constants of $K$ are unramified so that the
maximal unramified abelian of $K$ is of infinite degree.

For a quadratic number field $K$ of ${\mathbb Q}$, the
extended Hilbert class field $K_{H^+}$ of $K$ is the abelian extension such
that the fully decomposed primes of $K$ in $K_{H^+}$ are precisely
the nonzero principal prime ideals of $K$ generated by a totally
positive element. In the case of a quadratic field $K$ of ${\mathbb
Q}$, an element is totally positive if and only if its norm in ${\mathbb
Q}$ is the square of a real number. In other words, the norm group
in the id\`ele group $J_K$ of $K$ is $K^*(\Delta\times \prod_p U_p)$,
where $p$ runs through the prime numbers, $U_p$ denotes the
group of units of ${\mathbb Q}_p$ and $\Delta=\produ$
where $\{\pK_i\}_{i=1}^t$ is the set of infinite primes in $K$ ($t=1$
or $2$ and $K_{\pK_i}\in\{{\mathbb R},{\mathbb C}\}$).

For a cyclic Kummer extension $K/k$ of prime degree
$l$, where $k={\mathbb F}_q(T)$ and $l|q-1$, the analogue is
to consider as the norm group of $K_{H^+}$ the subgroup 
$K^*(\Delta\times \prod_{\pK\nmid\infty} U_{\pK})$ of the
id\`ele group $J_K$ of $K$, where $\Delta=\producto l$
and $U_{\pK}$ denotes the group of units of $K_{\pK}$.
This is the approach taken by Clement.

In \cite{CuMaRz2021}, E. O. Curiel-Anaya, M. R. Maldonado-Ram\'irez
and the first author, use this approach to extend the result to general
cyclic Kummer extensions.

In this note we will see that this approach still works for general Kummer
extensions $K$ of $k$ and obtain a version of extended genus field
for this kind of extensions. In the last section we compare three
definitions of extended genus fields for this kind of extensions
and present some examples.

\section{General Kummer extensions of $k$}\label{S2}

Let $k:={\mathbb F}_q(T)$ and let $K$ be a Kummer extension
of $k$ of exponent $n|q-1$. Then
\[
\Gal(K/k)\cong C_{n_1}\times \cdots \times C_{n_s},
\]
where in general $C_m$ denotes the cyclic group of order $m$
and $n_s|\cdots |n_2|n_1=n$.

We have 
\begin{gather}\label{Ec1}
K=k\big(\sqrt[n_1]{\gamma_1D_1},
\ldots, \sqrt[n_s]{\gamma_sD_s}\big)=K_1 \cdots K_s,
\end{gather}
where $K_{i}=\Ku {n_{i}}{\gamma_{i}
D_{i}}$,  $\gamma_{i}\in \f$,
$D_{i}\in R_T:=\F[T]$ is a monic polynomial and $\Gal(K_{i}/k)
\cong C_{n_i}$, $1\leq i\leq s$.

Let $P_1,\ldots, P_r\in R_T^+:=\{P\in R_T\mid P \text{\ is a
monic irreducible polynomial}\}$ be the primes of $k$
ramified in $K$ with ramification indices $e_1,\ldots, e_r$
respectively. Let $\Delta:=\producto n$, where $\{\pK_i\}_{i=1}^t$
are the infinite primes in $K$, that is, the primes above $\p$
($=\infty$), the infinite prime of $k$ and $k_{\infty}:=k_{\p}$.

Let ${\mc O}_K:=\{\alpha\in K\mid v_{\pK}(\alpha)\geq 0 \text{\ for
all\ } \pK\nmid\infty\}$. Then ${\mc O}_K$ is a Dedekind domain
such that the fractional ideals of ${\mc O}_K$ correspond
canonically to the finite primes of $K$. We denote by $J_K$ the
id\`ele group of $K$.

\begin{definition}\label{D2.1}{\rm{
The {\em extended Hilbert class field} $K_{H^+}$ of $K$
is the finite abelian extension of $K$ such that the finite primes of $K$ fully
decomposed in $K_{H^+}$ are precisely the principal nonzero ideals
generated by an element whose norm from $K$ to $k$ is an $n$-th power
in $k_{\infty}$.

The {\em extended genus field} $\Gamma$ of $K$
with respect to $k$ is the maximal
abelian extension of $k$ contained in $K_{H^+}$.
}}
\end{definition}

\begin{proposition}\label{P2.2}
We have that $[J_K:\*K\big(\A\big)]$ is finite.
\end{proposition}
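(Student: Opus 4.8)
The plan is to compare the index in question with the index of the larger subgroup $\*K\big(\B\big)$, whose associated quotient is simply the ideal class group of $\mc{O}_K$, and then to control the finite discrepancy coming from the infinite primes.

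First I would recall the standard surjection $J_K\lra I(\mc{O}_K)$ onto the group of fractional ideals, sending an id\`ele $(x_{\pK})_{\pK}$ to $\prod_{\pK\nmid\infty}\pK^{v_{\pK}(x_{\pK})}$. Its kernel is exactly $\B$, and it maps $\*K$ onto the subgroup of principal fractional ideals, so that $J_K/\*K\big(\B\big)\cong\operatorname{Cl}(\mc{O}_K)$. Since $\mc{O}_K$ is a Dedekind domain whose class group is finite, $[J_K:\*K\big(\B\big)]$ is finite.

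Next, because $\Delta\subseteq\prod_{\pK|\infty}K_{\pK}^*$ we have $\A\subseteq\B$, hence $\*K\big(\A\big)\subseteq\*K\big(\B\big)$, and the tower of subgroups gives
\[
[J_K:\*K\big(\A\big)]=[J_K:\*K\big(\B\big)]\cdot[\*K\big(\B\big):\*K\big(\A\big)].
\]
The second factor is bounded by $[\B:\A]$: the canonical map $\B\to\*K\big(\B\big)/\*K\big(\A\big)$ is surjective with kernel containing $\A$, so $\*K\big(\B\big)/\*K\big(\A\big)$ is a quotient of $\B/\A$. Since $\A$ and $\B$ share the same finite component $\prod_{\pK\nmid\infty}U_{\pK}$, we get $\B/\A\cong\big(\prod_{\pK|\infty}K_{\pK}^*\big)/\Delta$, reducing everything to a purely local statement at the primes above $\infty$.

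Finally I would realize $\Delta$ as the kernel of the homomorphism
\[
\Phi\colon\prod_{\pK|\infty}K_{\pK}^*\lra k_{\infty}^*/k_{\infty}^{*n},\qquad (\alpha_i)_i\longmapsto\prod_{i=1}^t\N_{K_{\pK_i}/k_{\infty}}(\alpha_i)\bmod k_{\infty}^{*n},
\]
so that $\big(\prod_{\pK|\infty}K_{\pK}^*\big)/\Delta$ embeds into $k_{\infty}^*/k_{\infty}^{*n}$. The decisive point is the finiteness of this last group. Writing $k_{\infty}^*\cong\pi^{\ma Z}\times\f\times U^{(1)}$ with $U^{(1)}$ the principal units, and using $n\mid q-1$, so that $\mcd(n,p)=1$ where $p$ denotes the characteristic of $\F$, the $n$-th power map is an automorphism of the pro-$p$ group $U^{(1)}$; hence $k_{\infty}^*/k_{\infty}^{*n}$ is finite (of order $n^2$). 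This is the only step that truly invokes the Kummer hypothesis $n\mid q-1$---the rest is bookkeeping with the index multiplicativity above---and combining the three observations yields that $[J_K:\*K\big(\A\big)]$ is finite.
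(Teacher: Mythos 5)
Your proposal is correct and follows essentially the same route as the paper: reduce to the finiteness of $Cl(\mc O_K)$ via the larger group $\*K\big(\B\big)$, then control $\big(\prod_{\pK|\infty}\*{K_{\pK}}\big)/\Delta$ by embedding it into $\*{k_{\infty}}/k_{\infty}^{*n}$, which has order $n^2$ because $n\mid q-1$. The only (harmless) difference is that you bound $[\*K B:\*K A]$ above by $[B:A]$ rather than computing it exactly via the index formula $[B:A]/[\*K\cap B:\*K\cap A]$ as the paper does.
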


\begin{proof}
The same proof in \cite{Cle92} works in this case. We present the
proof for the sake of completeness. Let $A:= \A$ and $B:=\B$.
We have
\begin{gather*}
\*K\big(\A\big)=\*K A\subseteq \*KB=\*K\big(\B\big),\\
J_K/\*KB=J_K/\*K\big(\B\big)\xrightarrow[\ \psi\ ]{\ \cong\ } Cl({\mc O}_K)
\end{gather*}
where the isomorphism $\psi$ is the one induced by the natural
map $\theta\colon J_K\longrightarrow D_K$
given by $\theta(\vec \alpha)=\prod_{\pK}\pK^{\alpha_{\pK}}$,
$D_K$ denotes the divisor group of $K$,  and
the ideal class group $Cl({\mc O}_K)$ is isomorphic to
$D_K/D_{\infty}P_K$,
where $D_{\infty}$ is the free group generated by the infinite
primes and $P_K$ is the group of principal divisors. We have
that $Cl({\mc O}_K)$ is finite.

Now $A\subseteq B$. Therefore
\begin{gather*}
\begin{align*}
[\*KB:\*KA]&=\frac{[B:A]}{[\*K\cap B:\*K\cap A]}=\frac{[\B:\A]}{[\*K
\cap B:\*K\cap A]}\\
&=\frac{[\prod_{\pK|\infty}\*{K_{\pK}}:\Delta]}{[U_K:U_K^{(+)}]},
\end{align*}
\intertext{where}
U_K:=\*K\cap B=\{x\in\*K\mid v_{\pK}(x)=0\text{\ for all\ }
\pK\nmid \infty\}=\*{{\mc O}_K}\quad\text{(units of ${\mc O}_K$)}
\intertext{and}
U_K^{(+)}:=\{x\in U_K\mid\N_{K/k}(x)=
\prod_{\pK|\infty}\N_{K_{\pK}/k_{\infty}}(x)\in k_{\infty}^{*n}\}.
\end{gather*}

Now $n|q-1$ so that $\gcd(n,p)=1$ where $p$ is the characteristic
of $k$. Therefore $(U_{\pK}^{(1)})^n=U_{\pK}^{(1)}$. Since $\*{k_{\infty}}=
\langle\pi_{\infty}\rangle\times \f\times U_{\infty}^{(1)}$ where $\pi_{
\infty}=1/T$ is a uniformizing parameter, we have $k_{\infty}^{*n}=
\langle\pi_{\infty}^n\rangle\times {\f}^n\times (U_{\infty}^{(1)})^n=
\langle\pi_{\infty}^n\rangle\times {\f}^n\times U_{\infty}^{(1)}$. Thus
$\big|\frac{\*k_{\infty}}{k_{\infty}^{*n}}\big|=n^2$. Consider
\begin{gather*}
\varphi\colon\prod_{\pK|\infty}\*{K_{\pK}}\lra \frac{\*{k_{\infty}}}{k_{\infty}^{*n}},
\quad \text{given by}\quad
\varphi\big((\alpha_{\pK})_{\pK|\infty}\big)=\Big(\prod_{\pK|\infty}
\N_{K_{\pK}/k_{\infty}}(\alpha_{\pK})\bmod k_{\infty}^{*n}\Big).
\intertext{Then $\ker \varphi=\Delta$ and $\varphi$ induces an injection}
\tilde{\varphi}\colon\frac{\Big(\prod_{\pK|\infty}\*{K_{\pK}}\Big)}{\Delta}
\hookrightarrow \frac{\*{k_{\infty}}}{k_{\infty}^{*n}}.
\end{gather*}
Hence $[B:A]|n^2$ and finally $[J_K:\*K A]<\infty$.
\end{proof}

We have that $\*K\big(\Delta\times \prod_{\pK\nmid \infty}U_{\pK}\big)$
is an open subgroup of finite index of $J_K$. Let $L$ be the class field
corresponding to $\*K\big(\Delta\times \prod_{\pK\nmid \infty}U_{\pK}\big)$.
Then $\N_{L/K}J_L=\*K\big(\Delta\times \prod_{\pK\nmid \infty}U_{\pK}\big)$
and $\Gal(L/K)\cong \frac{J_K}{\*K\big(\Delta\times \prod_{\pK\nmid
\infty}U_{\pK}\big)}$.

\begin{proposition}\label{P2.3}
Let ${\eu q}$ be a finite prime of $K$, that is, ${\eu q}$ is a nonzero prime ideal of
${\mc O}_K$. Then ${\eu q}$ decomposes fully in $L$ if and only if ${\eu q}$ is a
principal ideal with ${\eu q}=\langle\delta\rangle$ and $\N_{K/k}(\delta)\in
k_{\infty}^{*n}$.
\end{proposition}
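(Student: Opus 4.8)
The plan is to translate full decomposition into a membership condition in the norm group via class field theory, and then read off both principality and the norm condition by applying the divisor map $\theta$ from the proof of Proposition \ref{P2.2}. Write $H:=\*K\big(\A\big)$, so that $L$ is the class field with $\Gal(L/K)\cong J_K/H$. Since $\prod_{\pK\nmid\infty}U_{\pK}\subseteq H$, the image of every local unit group $U_{\eu q}$ in $J_K/H$ is trivial, so I note that each finite prime of $K$ is unramified in $L$. Consequently, by the reciprocity law, $\eu q$ decomposes fully in $L$ if and only if its Frobenius is trivial, that is, if and only if the idele $\vec\pi$ having a uniformizer $\pi_{\eu q}$ at $\eu q$ and $1$ at every other place lies in $H$. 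The whole argument therefore reduces to proving that $\vec\pi\in \*K\big(\A\big)$ is equivalent to the stated conditions on $\eu q$.

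For the direct implication I would suppose $\vec\pi=xu$ with $x\in\*K$ and $u\in\A$, and apply $\theta$. Since $\theta$ kills the finite unit components of $u$ and sends its infinite components into $D_{\infty}$, I obtain $\eu q=\theta(\vec\pi)=(x)+\theta(u)$ with $\theta(u)\in D_{\infty}$. Comparing finite parts shows that the fractional ideal of ${\mc O}_K$ generated by $x$ equals $\eu q$, so $\eu q=\langle\delta\rangle$ is principal with $\delta:=x$. Writing $u=\delta^{-1}\vec\pi$ and inspecting the infinite components, where $\vec\pi$ is trivial, gives that $(\delta^{-1})_{\pK|\infty}$ lies in $\Delta$, hence $\prod_{\pK|\infty}\N_{K_{\pK}/k_{\infty}}(\delta)\in k_{\infty}^{*n}$. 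Using the local--global compatibility $\N_{K/k}(\delta)=\prod_{\pK|\infty}\N_{K_{\pK}/k_{\infty}}(\delta)$ inside $k_{\infty}^*$, I conclude $\N_{K/k}(\delta)\in k_{\infty}^{*n}$.

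For the converse I would start from $\eu q=\langle\delta\rangle$ with $\N_{K/k}(\delta)\in k_{\infty}^{*n}$ and set $\beta:=\delta^{-1}\vec\pi$. At $\eu q$ the component is $\delta^{-1}\pi_{\eu q}$, a unit since $v_{\eu q}(\delta)=1$; at every other finite prime it is $\delta^{-1}$, again a unit because $\langle\delta\rangle=\eu q$; and at the infinite primes it equals $\delta^{-1}$. Thus $\beta\in\B$, and its infinite part lands in $\Delta$ precisely because $\prod_{\pK|\infty}\N_{K_{\pK}/k_{\infty}}(\delta^{-1})=\N_{K/k}(\delta)^{-1}\in k_{\infty}^{*n}$, again by local--global compatibility. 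Hence $\beta\in\A$, so $\vec\pi=\delta\beta\in\*K\big(\A\big)=H$ and $\eu q$ splits completely.

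The step I expect to require the most care is the class field theory dictionary in the first paragraph: checking that all finite primes are unramified in $L$ and that full decomposition is detected exactly by the membership $\vec\pi\in H$. The other delicate point, used in both directions, is the identity $\N_{K/k}(\delta)=\prod_{\pK|\infty}\N_{K_{\pK}/k_{\infty}}(\delta)$ in $k_{\infty}^*$, namely that the global norm localized at $\infty$ is the product of the local norms over the primes above $\infty$; this is what converts the adelic condition at the infinite places into the clean global norm condition appearing in the statement.
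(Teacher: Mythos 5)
Your proof is correct and follows essentially the same route as the paper: both reduce full decomposition of $\eu q$ in $L$ to a membership condition in the norm group $\*K\big(\A\big)$ and then decode principality and the norm condition at infinity from the idele components (you test the single uniformizer idele via the Frobenius, while the paper tests the whole embedded group $\theta\lceil \*{K_{\eu q}}\rceil_{\eu q}$; these are equivalent since $U_{\eu q}$ already lies in the norm group). You additionally write out the converse implication explicitly, which the paper leaves implicit.
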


\begin{proof}
From class field theory, we
have that ${\eu q}$ decomposes fully in $L/K$ if and only if $\theta\lceil
\*{K_{{\eu q}}}\rceil_{{\eu q}} \subseteq \*K\big(\Delta\times \prod_{\pK\nmid \infty}
U_{\pK}\big)$, where $\theta\lceil x \rceil_{{\eu q}}:=(\ldots, 1, x, 1,\ldots)$
for $x\in\*{K_{{\eu q}}}$
(see \cite[Corolario 17.6.196]{RzeVil2017}) if and only if for all $x\in\*{K_{{\eu q}}}$
there exist $\beta_x\in\*K$ and $\vec\alpha_x\in \Delta\times \prod_{\pK\nmid 
\infty}U_{\pK}$ with $\theta\lceil x\rceil_{\eu q}=\beta_x\vec\alpha_x$.
Thus $\vec\alpha_x=(\ldots, \beta_x^{-1},\ldots, \beta_x^{-1},\beta_x^{-1}x,
\beta_x^{-1},\ldots)$. This is equivalent to $v_{\pK}
(\beta_x)=0$ for all $\pK\neq {\eu q}$ and $\pK\nmid\infty$
and $\N_{K/k}(\beta_x^{-1})=\prod_{\pK|\infty}\N_{K_{\pK}/k_{\infty}}(\beta_x^{-1})
\in k_{\infty}^{*n}$.

Let $x\in\*{K_{{\eu q}}}$. The principal ideal in ${\mc O}_K$ generated by
$\beta_x$ satisfies $\langle \beta_x
\rangle={\eu q}^{n_x}=\beta_x{\mc O}_K$, from some $n_x\in{\ma Z}$.
In particular for $x\in\*{K_{{\eu q}}}$ with
$v_{{\eu q}}(\beta_x)=1$ we have $\langle \beta_x\rangle={\eu q}$, $\beta_x\in
\*K$. Hence ${\eu q}$ is a principal ideal, ${\eu q}=\langle \beta_x\rangle$ and
$\N_{K/k}(\beta_x)\in k_{\infty}^{*n}$.
\end{proof}

\begin{corollary}\label{C2.4}
The extended class field $K_{H^+}$ of $K$ is the class field corresponding
to $\*K\big(\Delta\times \prod_{{\eu q}\nmid \infty}U_{{\eu q}}\big)$ and a finite
prime ${\eu q}$ of $K$ decomposes fully in $K_{H^+}$ if and only if ${\eu q}$ is
a principal ideal and the norm from $K$ to $k$ of a generator belongs to
$k_{\infty}^{*n}$. \fin
\end{corollary}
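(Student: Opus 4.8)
The plan is to recognize that the field $L$ constructed just after Proposition~\ref{P2.2} is exactly $K_{H^+}$, so that the decomposition statement becomes a mere transcription of Proposition~\ref{P2.3}. First I would record what $L$ is: by Proposition~\ref{P2.2} the subgroup $H:=\*K\big(\Delta\times\prod_{\eu q\nmid\infty}U_{\eu q}\big)$ is an open subgroup of finite index in $J_K$, so the main theorem of class field theory attaches to it a finite abelian extension $L/K$ with $\N_{L/K}J_L=H$ and $\Gal(L/K)\cong J_K/H$. Thus $L$ is a legitimate finite abelian extension to compare with $K_{H^+}$.

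Next I would invoke Proposition~\ref{P2.3}, which asserts that the finite primes $\eu q$ of $K$ decomposing fully in $L$ are precisely the principal ideals $\eu q=\langle\delta\rangle$ with $\N_{K/k}(\delta)\in k_{\infty}^{*n}$. This is word for word the property that Definition~\ref{D2.1} uses to characterize the extended Hilbert class field $K_{H^+}$, so $L$ already satisfies the defining property of $K_{H^+}$. The second assertion of the corollary — the decomposition criterion for finite primes — will then be literally Proposition~\ref{P2.3} read through the identification $K_{H^+}=L$.

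The crux, and the one step that goes beyond bookkeeping, is passing from "$L$ has the prescribed split primes" to "$L=K_{H^+}$"; this is the main obstacle. Here I would use that a finite abelian extension of $K$ is uniquely determined by its set of fully decomposed finite primes: if two such extensions $M_1,M_2$ shared the same split finite primes yet $M_1\neq M_2$, then applying the Chebotarev density theorem to the compositum $M_1M_2$ would produce a finite prime split in one but not the other, a contradiction. Granting this, the fact that $L$ realizes exactly the split behavior demanded of $K_{H^+}$ in Definition~\ref{D2.1} forces $K_{H^+}=L$, and nothing further is needed — Propositions~\ref{P2.2} and~\ref{P2.3} have already supplied the finiteness of the extension and the local computation turning full decomposition into principality together with the $n$-th power norm condition at infinity.
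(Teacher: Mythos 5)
Your proposal is correct and follows the same route the paper takes: the corollary is stated there with no written proof precisely because it is the identification of $K_{H^+}$ with the class field $L$ of $\*K\big(\Delta\times\prod_{\eu q\nmid\infty}U_{\eu q}\big)$ via Proposition~\ref{P2.3}. Your added Chebotarev argument that an abelian extension is determined by its split finite primes is exactly the (implicit) step justifying that the defining property of Definition~\ref{D2.1} pins down $K_{H^+}$ uniquely, so nothing is missing.
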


\begin{proposition}\label{P2.5}
The field of constants of $K_{H^+}$ is ${\mathbb F}_{q^n}$.
\end{proposition}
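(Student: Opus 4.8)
The plan is to recast the statement in the idèle-theoretic language already in place and then invoke the standard dictionary of class field theory relating constant field extensions to the degree map. Write $N:=\*K\big(\Delta\times\prod_{\pK\nmid\infty}U_{\pK}\big)$, so that $K_{H^+}$ is the class field of $N$ and $\Gal(K_{H^+}/K)\cong J_K/N$. Throughout we use that $K$ has exact field of constants $\F$ (this is where the standing hypotheses on the $D_i$ enter; if some $D_i$ were a perfect power the constant field could grow). The maximal constant extension $\mathbb F_{q^m}K$ inside $K_{H^+}$ is governed by the degree map $\deg\colon J_K\to\ma Z$, $\deg(\vec\alpha)=\sum_{\pK}v_{\pK}(\alpha_{\pK})\deg\pK$: the reciprocity map of the everywhere-unramified constant extension sends an idèle to the Frobenius power equal to its degree, so $\mathbb F_{q^m}K\subseteq K_{H^+}$ exactly when $\deg(N)\subseteq m\ma Z$. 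Thus the constant field of $K_{H^+}$ is $\mathbb F_{q^{m_0}}$ with $m_0=[\ma Z:\deg(N)]$, and it suffices to prove $\deg(N)=n\ma Z$.

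First I would reduce $\deg(N)$ to $\deg(\Delta)$. The product formula makes $\deg$ vanish on $\*K$, and it vanishes on each $U_{\pK}$ with $\pK\nmid\infty$ because finite units have valuation $0$; hence $\deg(N)=\deg(\Delta)$, with $\Delta$ regarded as a subgroup of $\prod_{\pK|\infty}K_{\pK}^*$ having trivial finite coordinates. For $\vec\alpha=(\alpha_{\pK})_{\pK|\infty}\in\Delta$, the local norm formula $v_\infty\circ\N_{K_{\pK}/k_\infty}=f_{\pK}\,v_{\pK}$, where $f_{\pK}=\deg\pK$ is the residue degree of $\pK$ over $\p$, gives
\[
\deg(\vec\alpha)=\sum_{\pK|\infty}f_{\pK}\,v_{\pK}(\alpha_{\pK})
=v_\infty\Big(\prod_{\pK|\infty}\N_{K_{\pK}/k_\infty}(\alpha_{\pK})\Big).
\]
Since the defining condition of $\Delta$ forces this product into $k_\infty^{*n}$, and $v_\infty(k_\infty^{*n})=n\ma Z$ as computed in the proof of Proposition \ref{P2.2}, the easy inclusion $\deg(\Delta)\subseteq n\ma Z$ follows.

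The substantive part is the reverse inclusion, namely exhibiting an element of $\Delta$ of degree exactly $n$. Here I would fix one infinite prime $\pK_1$, set all other coordinates to $1$, and use two facts. First, the decomposition group at $\pK_1$ is a subgroup of the exponent-$n$ group $\Gal(K/k)$, so its unramified quotient—cyclic of order $f_{\pK_1}$—has order dividing $n$; thus $f_{\pK_1}\mid n$, and choosing $\alpha=\pi_{\pK_1}^{n/f_{\pK_1}}$ gives $v_\infty(\N_{K_{\pK_1}/k_\infty}(\alpha))=n$. Second, the norm induces on residues the field norm $\N_{\mathbb F_{q^{f_{\pK_1}}}/\F}$, which is surjective; multiplying $\alpha$ by a suitable unit of $K_{\pK_1}$ I can make the residue class of $\N_{K_{\pK_1}/k_\infty}(\alpha)$ an $n$-th power in $\f$, while its $1$-unit part is automatically an $n$-th power because $\gcd(n,p)=1$. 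By the description $k_\infty^{*n}=\langle\pi_\infty^n\rangle\times{\f}^n\times U_\infty^{(1)}$ from Proposition \ref{P2.2}, this places $\N_{K_{\pK_1}/k_\infty}(\alpha)$ in $k_\infty^{*n}$, so the resulting idèle lies in $\Delta$ and has degree $n$. Hence $\deg(N)=n\ma Z$, $m_0=n$, and the field of constants of $K_{H^+}$ is $\mathbb F_{q^n}$. The only delicate point is precisely this last adjustment: one must land in $k_\infty^{*n}$ on the nose rather than merely match valuations, and it is the surjectivity of the finite-field norm together with $(U_\infty^{(1)})^n=U_\infty^{(1)}$ that secures it.
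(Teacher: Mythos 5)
Your argument is correct and follows essentially the same route as the paper: both reduce the question to the image of the degree map on the norm group $\*K\big(\Delta\times\prod_{\pK\nmid\infty}U_{\pK}\big)$, prove the inclusion $\deg(\Delta)\subseteq n\ma Z$ via $v_\infty\circ\N_{K_{\pK}/k_\infty}=f_{\pK}v_{\pK}$, and then exhibit an idèle in $\Delta$ of degree exactly $n$. The only difference is that you construct that element explicitly (using $f_{\pK_1}\mid n$ and a unit adjustment via surjectivity of the residue norm), where the paper simply invokes $\prod_{\pK|\infty}\N_{K_{\pK}/k_\infty}\*{K_{\pK}}=k_\infty^{*n}$; your version supplies the justification for the containment actually needed there.
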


\begin{proof}
We have (see \cite[Teorema 17.6.192]{RzeVil2017}) that the field of constants
of $K_{H^+}$ is ${\mathbb F}_{q^d}$ where $d=\min\{\deg \vec\alpha\mid
\vec\alpha\in \Delta\times\prod_{\pK\nmid\infty}U_{\pK}\text{\ and\ }\deg\vec\alpha
>0\}$. We will see that $d=n$.

On the one hand, since $\prod_{\pK|\infty}
\N_{K_{\pK}/k_{\infty}}\*{K_{\pK}}=k_{\infty}^{*n}$,
there exists $\vec\alpha\in \Delta\times\prod_{\pK\nmid\infty}U_{\pK}$ with
$v_{\infty}\Big(\prod_{\pK|\infty}\N_{K_{\pK}/k_{\infty}}(\alpha_{\pK})\Big)=
v_{\infty}(\pi_{\infty}^n)=n$.
On the other hand, let $\vec\alpha\in
\Delta\times\prod_{\pK\nmid\infty}U_{\pK}$. Then $v_{\pK}(\alpha_{\pK})=0$ for
all $\pK\nmid \infty$ and $\xi=\prod_{\pK|\infty}\N_{K_{\pK}/k_{\infty}}(\alpha_{\pK})
\in k_{\infty}^{*n}$. Therefore (see \cite[Proposici\'on 17.3.38]{RzeVil2017})
\begin{align*}
\deg\vec\alpha&=\sum_{\pK}\deg\pK\cdot v_{\pK}(\alpha_{\pK})=\sum_{\pK|\infty}
\deg\pK \cdot v_{\pK}(\alpha_{\pK})=\sum_{\pK|\infty}f_{\pK}v_{\pK}(\alpha_{\pK})\\
&=\sum_{\pK|\infty}v_{\infty}\big(\N_{K_{\pK}/k_{\infty}}(\alpha_{\pK})\big)=
v_{\infty}\Big(\prod_{\pK|\infty}\N_{K_{\pK}/k_{\infty}}(\alpha_{\pK})\Big)
=v_{\infty}(\xi)=na
\end{align*}
for some $a\in{\mathbb Z}$. Hence $d=n$. 
\end{proof}

\begin{proposition}\label{P2.6}
Every abelian extension $L/k$ such that $K\subseteq L$, $L/K$ is unramified at the
finite primes and such that $\Gal(L/k)$ is of exponent a divisor of $n$, is
contained in $K_{H^+}$.
\end{proposition}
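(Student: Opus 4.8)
The plan is to recast $L\subseteq K_{H^+}$ as an inclusion of norm groups in $J_K$ and then check it factor by factor. Since $L/k$ is abelian, so is $L/K$, and class field theory attaches to $L/K$ its norm group $N_L:=\*K\,\N_{L/K}(J_L)\subseteq J_K$, with $\Gal(L/K)\cong J_K/N_L$; the Galois correspondence reverses inclusions. By Corollary \ref{C2.4}, $K_{H^+}$ is the class field of $\*K\big(\Delta\times\prod_{\pK\nmid\infty}U_{\pK}\big)$, so $L\subseteq K_{H^+}$ is equivalent to $\*K\big(\Delta\times\prod_{\pK\nmid\infty}U_{\pK}\big)\subseteq N_L$. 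As $\*K\subseteq N_L$ automatically, it suffices to prove
\[
\Delta\times\prod_{\pK\nmid\infty}U_{\pK}\subseteq N_L,
\]
and I would establish this separately for the finite factor $\prod_{\pK\nmid\infty}U_{\pK}$ and for the infinite factor $\Delta$.

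For the finite primes I would argue locally. Fix $\pK\nmid\infty$ and $\pL\mid\pK$ in $L$. Because $L/K$ is unramified at $\pK$, the local extension $L_{\pL}/K_{\pK}$ is unramified, so the local norm $\N_{L_{\pL}/K_{\pK}}\colon U_{\pL}\to U_{\pK}$ is surjective. Hence any $u\in U_{\pK}$ has a unit preimage, and lifting it to the idèle of $L$ that equals this preimage at one $\pL\mid\pK$ and $1$ elsewhere shows that $\theta\lceil u\rceil_{\pK}$ lies in $\N_{L/K}(J_L)$. Choosing the components independently at distinct finite primes produces an idèle of $L$ whose norm is an arbitrary element of $\prod_{\pK\nmid\infty}U_{\pK}$, so this whole factor lies in $N_L$.

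The heart of the proof is the infinite factor, and here I would invoke the functoriality of the reciprocity map under the norm. Writing $\rho_{L/K}\colon J_K\to\Gal(L/K)$ and $\rho_{L/k}\colon J_k\to\Gal(L/k)$ for the Artin maps (the latter defined since $L/k$ is abelian), the consistency property gives $\rho_{L/K}(\vec\alpha)=\rho_{L/k}\big(\N_{K/k}(\vec\alpha)\big)$ inside $\Gal(L/k)$; in particular $\vec\alpha\in N_L$ iff $\N_{K/k}(\vec\alpha)$ lies in $\ker\rho_{L/k}$. Now take $\vec\alpha\in\Delta$, viewed in $J_K$ as trivial at every finite prime. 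Since all infinite primes of $K$ lie over $\infty$, the idèle $\N_{K/k}(\vec\alpha)$ is supported only at $\infty$, with $\infty$-component $\xi=\prod_{\pK\mid\infty}\N_{K_{\pK}/k_{\infty}}(\alpha_{\pK})$, which by the definition of $\Delta$ lies in $k_{\infty}^{*n}$. Writing $\xi=\beta^{n}$ with $\beta\in\*{k_{\infty}}$, we have $\theta\lceil\xi\rceil_{\infty}=\theta\lceil\beta\rceil_{\infty}^{\,n}$, whence
\[
\rho_{L/k}\big(\N_{K/k}(\vec\alpha)\big)=\rho_{L/k}\big(\theta\lceil\beta\rceil_{\infty}\big)^{n}=1,
\]
the last equality because $\Gal(L/k)$ has exponent dividing $n$. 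Thus $\vec\alpha\in N_L$, giving $\Delta\subseteq N_L$.

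Combining the two factors with $\*K\subseteq N_L$ yields $\*K\big(\Delta\times\prod_{\pK\nmid\infty}U_{\pK}\big)\subseteq N_L$, and the Galois correspondence then delivers $L\subseteq K_{H^+}$. The step I expect to be the main obstacle is the infinite factor: one must apply the correct functoriality relating $\rho_{L/K}$ and $\rho_{L/k}$ through $\N_{K/k}$ (which is exactly where the hypothesis that $L/k$, and not merely $L/K$, is abelian is used), and then notice that the defining condition of $\Delta$ is precisely what forces the $\infty$-component of $\N_{K/k}(\vec\alpha)$ to be an $n$-th power — which is annihilated by $\rho_{L/k}$ exactly because $\Gal(L/k)$ has exponent dividing $n$.
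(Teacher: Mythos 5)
Your proof is correct and follows essentially the same route as the paper's: both reduce to showing $\Delta\times\prod_{\pK\nmid\infty}U_{\pK}$ lies in the norm group of $L/K$, handle the finite factor by local class field theory at unramified primes, and handle $\Delta$ via the norm-compatibility of the Artin maps for $L/K$ and $L/k$ together with the exponent-$n$ hypothesis. The only cosmetic difference is that you verify the unit factor by surjectivity of the local norm on units, while the paper phrases it as the Artin symbol vanishing on $U_{\pK}$ at unramified primes.
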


\begin{proof}
Since $K_{H^+}$ is the class field
of $\A$ (Corollary \ref{C2.4}), we have that $K_{H^+}/K$ is unramified at
the finite primes.

Let $L/k$ be an abelian extension with $K\subseteq L$, such that $L/K$ is 
unramified at the finite primes and such that 
$\Gal(L/K)$ is of exponent a divisor of $n$.

Let $\N_{L/K} J_{L}=\*K {\mc H}\subseteq J_K$ for some ${\mc H}$.
If ${\eu q}$ is a finite prime, then ${\eu q}$
is unramified in $L/K$ so that $U_{{\eu q}}\subseteq {\mc H}$
(see \cite[Corolario 17.6.196]
{RzeVil2017}). If $\vec\alpha\in\prod_{\pK\nmid\infty}U_{\pK}$,
and $(\underline{\ },E/F)_{\pK}$ denotes the Artin reciprocity map,
we have (see \cite[Teorema 17.6.149]{RzeVil2017})
\[
(\vec\alpha,L/K)=\prod_{\pK|\infty}(1,L_{\pL}
/K_{\pK})\times \prod_{\pK\nmid\infty}
(\alpha_{\pK},L_{\pL}/K_{\pK})=1,
\]
where $\pL$ is a prime in $L$ above $\pK$. It follows that 
$\vec\alpha\in {\mc H}$ and
$\prod_{\pK\nmid \infty}U_{\pK}\subseteq {\mc H}$. 

If $\vec\alpha\in\Delta$ we have the commutative diagram
\[
\xymatrix{
J_K\ar@{->}[rrr]^{(\underline{\ },L/K)}\ar@{->}[d]_{\N_{K/k}}&&&
\Gal(L/K)\ar@{->}[d]^{\iota}\\
J_k\ar@{->}[rrr]_{(\underline{\ },L/k)}&&&\Gal(L/k)
}
\]
where $\iota$ is the natural embedding. We have
that $\N_{K/k}(\Delta)=\{(\delta,1,\ldots,1,\ldots)\mid \delta\in
k_{\infty}^{*n}\}$. Since $\Gal(L/k)$ is of exponent $n$, we
have that $\N_{K/k}(\Delta)\subseteq \ker ({\underline{\ }},L/k)$
so that $\Delta\subseteq \ker ({\underline{\ }},L/K)=\*K{\mc H}$. Hence
$\Delta\times\prod_{\pK\nmid\infty}U_{\pK}\subseteq \*K {\mc H}$
and $L\subseteq K_{H^+}$.
\end{proof}

\begin{proposition}\label{P2.7}
The extension $K_{H^+}/k$ is a Galois extension.
\end{proposition}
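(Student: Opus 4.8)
The plan is to use the functoriality of the class field correspondence. Recall that $K/k$ is abelian, in particular Galois, since it is a Kummer extension, and that $K_{H^+}$ is the class field attached to the open, finite-index subgroup $\mathcal{N}:=\*K\big(\A\big)$ of $J_K$ (Corollary \ref{C2.4}). For the Galois extension $K/k$ with group $G:=\Gal(K/k)$, the group $G$ acts on the id\`ele group $J_K$: each $\sigma\in G$ permutes the primes of $K$ lying over a fixed prime of $k$ and induces, for every prime $\pK$, a continuous $k_{\infty}$- (respectively local) isomorphism $\sigma\colon K_{\pK}\xrightarrow{\ \cong\ }K_{\sigma\pK}$. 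The class field $K_{H^+}$ of $\mathcal{N}$ is then Galois over $k$ if and only if $\mathcal{N}$ is $G$-stable, i.e. $\sigma(\mathcal{N})=\mathcal{N}$ for all $\sigma\in G$: if $\tilde\sigma\in\Gal(\bar k/k)$ restricts to $\sigma$ on $K$ (possible because $K/k$ is normal), then $\tilde\sigma(K_{H^+})$ is the class field of $\sigma(\mathcal{N})$, so $\tilde\sigma(K_{H^+})=K_{H^+}$ exactly when $\sigma(\mathcal{N})=\mathcal{N}$. Thus the whole statement reduces to proving that $\mathcal{N}$ is $G$-invariant.

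Since $\sigma(\*K)=\*K$, it suffices to show that $A:=\A$ is $G$-stable. For the finite part this is immediate: $\sigma$ permutes the finite primes $\pK\nmid\infty$ and carries $U_{\pK}$ isomorphically onto $U_{\sigma\pK}$, so $\sigma\big(\prod_{\pK\nmid\infty}U_{\pK}\big)=\prod_{\pK\nmid\infty}U_{\pK}$. The remaining, key step is the stability of $\Delta$. Here $\sigma$ permutes the infinite primes $\{\pK_i\}_{i=1}^t$, and for each $i$ the induced map $\sigma\colon K_{\pK_i}\to K_{\sigma\pK_i}$ is a $k_{\infty}$-isomorphism, hence compatible with the local norms, $\N_{K_{\sigma\pK_i}/k_{\infty}}\big(\sigma(\alpha_i)\big)=\N_{K_{\pK_i}/k_{\infty}}(\alpha_i)$. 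Therefore the total product $\prod_i\N_{K_{\pK_i}/k_{\infty}}(\alpha_i)$ is unchanged, up to the reindexing induced by the permutation, when we replace $(\alpha_1,\dots,\alpha_t)$ by its image under $\sigma$. Consequently the defining condition $\prod_i\N_{K_{\pK_i}/k_{\infty}}(\alpha_i)\in k_{\infty}^{*n}$ is preserved, so $\sigma(\Delta)=\Delta$. Combining the three parts yields $\sigma(\mathcal{N})=\mathcal{N}$ for every $\sigma\in G$, and by the criterion above $K_{H^+}/k$ is Galois.

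I expect the main obstacle to be the precise justification of the norm compatibility $\N_{K_{\sigma\pK}/k_{\infty}}\circ\sigma=\N_{K_{\pK}/k_{\infty}}$ under the conjugation isomorphism at the infinite primes, together with the careful bookkeeping of how $G$ permutes those primes; once this local compatibility is secured, the $G$-invariance of $\Delta$, and hence of $\mathcal{N}$, follows at once, and the Galois criterion from class field theory closes the argument. (Separability is automatic, as $K_{H^+}/K$ is abelian and $K/k$ is separable, so normality is the only point requiring proof.)
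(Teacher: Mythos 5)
Your proof is correct and follows essentially the same route as the paper: the paper's (very terse) argument also reduces the statement to the $\Gal(K/k)$-invariance of $\Delta\times\prod_{\pK\nmid\infty}U_{\pK}$, which you have simply spelled out in detail, including the norm-compatibility at the infinite primes.
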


\begin{proof}
It is the same as the one in \cite{Cle92}. It follows from the fact that
$\sigma\big(\Delta\times\prod_{\pK\nmid \infty}U_{\pK}\big)=
\Delta\times\prod_{\pK\nmid \infty}U_{\pK}$ for each $k$--embedding
$\sigma$ of $K_{H^+}$ in an algebraic closure $\bar k$ of $k$.
\end{proof}

The following result is the analogue of Lemma 1.5 and Proposition
1.6 in \cite{Cle92}. The same proof works in our more general setting.

\begin{proposition}\label{P2.8}
Let $I_K$ be the group of fractional ideals of ${\mc O}_K$ and
$P_K$ the subgroup of principal ideals. Let $J_K^{(+)}=
\{\vec\alpha\in J_K\mid (\alpha_{\pK})_{\pK|\infty}\in \Delta\}$, $K^{(+)}=
\*K\cap J_K^{(+)}$ and $P_K^{(+)}=\{\langle\beta\rangle\in P_K\mid
\beta\in K^{(+)}\}$. Then
\begin{gather*}
\frac{J_K}{\*K\big(\A\big)}\cong\frac{J_K^{(+)}}{K^{(+)}
\big(\Delta\times\prod_{\pK
\nmid\infty}U_{\pK}\big)}\cong \frac{I_K}{P_K^{(+)}}=:
Cl({\mc O}_K)^{(+)}. \tag*{$\fin$}
\end{gather*}
\end{proposition}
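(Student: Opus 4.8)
The plan is to establish the two isomorphisms separately, reading off the middle and right-hand groups as images of $J_K^{(+)}$ under two natural maps, and then to identify $J_K/\*K\big(\A\big)$ with the middle group by the second isomorphism theorem. Throughout I would work inside the abelian group $J_K$, so that all subgroups are normal and the isomorphism theorems apply freely.

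For the right-hand isomorphism I would restrict the divisor map $\theta$ of Proposition~\ref{P2.2} to the finite primes, obtaining $\theta_0\colon J_K^{(+)}\lra I_K$, $\theta_0(\vec\alpha)=\prod_{\pK\nmid\infty}\pK^{v_{\pK}(\alpha_{\pK})}$. This map is surjective (given a fractional ideal, assign the corresponding powers of uniformizers at the finite places and put all infinite coordinates equal to $1$, which lies in $\Delta$ since $\N(1)=1\in k_{\infty}^{*n}$), and its kernel is exactly $\A$, because an element of $J_K^{(+)}$ with trivial finite part is a unit at every finite prime and already has infinite part in $\Delta$. Thus $\theta_0$ induces $J_K^{(+)}/\big(\A\big)\cong I_K$. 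It then remains to check that $\theta_0$ carries $K^{(+)}\big(\A\big)$ exactly onto $P_K^{(+)}$: by definition $\theta_0(K^{(+)})=\{\langle\beta\rangle\mid\beta\in K^{(+)}\}=P_K^{(+)}$, and since $\ker\theta_0=\A\subseteq K^{(+)}\big(\A\big)$, the full preimage of $P_K^{(+)}$ is $K^{(+)}\big(\A\big)$; this yields $J_K^{(+)}/K^{(+)}\big(\A\big)\cong I_K/P_K^{(+)}$.

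For the left-hand isomorphism I would use the inclusion $J_K^{(+)}\hookrightarrow J_K$ followed by the quotient map onto $J_K/\*K\big(\A\big)$. Two facts are needed. First, $J_K^{(+)}\cap\*K\big(\A\big)=K^{(+)}\big(\A\big)$: the inclusion $\supseteq$ is immediate since $K^{(+)}$ and $\A$ both lie in $J_K^{(+)}$, and for $\subseteq$ one writes an element as $\beta\vec a$ with $\beta\in\*K$, $\vec a\in\A\subseteq J_K^{(+)}$, whence $\beta=(\beta\vec a)\vec a^{-1}\in\*K\cap J_K^{(+)}=K^{(+)}$. Second, and this is the crux, $J_K=\*K\,J_K^{(+)}$. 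Granting these, the second isomorphism theorem gives
\[
\frac{J_K}{\*K\big(\A\big)}=\frac{\*K J_K^{(+)}}{\*K\big(\A\big)}\cong
\frac{J_K^{(+)}}{J_K^{(+)}\cap\*K\big(\A\big)}=\frac{J_K^{(+)}}{K^{(+)}\big(\A\big)},
\]
using $\A\subseteq J_K^{(+)}$ so that $\*K J_K^{(+)}\big(\A\big)=\*K J_K^{(+)}=J_K$.

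I expect the surjectivity $J_K=\*K\,J_K^{(+)}$ to be the main obstacle; equivalently, the map $\*K\to\big(\prod_{\pK|\infty}\*{K_{\pK}}\big)/\Delta$, $\beta\mapsto(\beta)_{\pK|\infty}\bmod\Delta$, must be onto. Here I would invoke weak approximation together with the hypothesis $n\mid q-1$. Since $\gcd(n,p)=1$, the subgroup $k_{\infty}^{*n}$ is open in $\*{k_{\infty}}$ (it contains $U_{\infty}^{(1)}$, as computed in the proof of Proposition~\ref{P2.2}), so $\*{k_{\infty}}/k_{\infty}^{*n}$ is discrete and hence $\Delta=\ker\varphi$ is an open subgroup of $\prod_{\pK|\infty}\*{K_{\pK}}$, where $\varphi$ is the product-of-norms map of that proof. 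By weak approximation $\*K$ is dense in $\prod_{\pK|\infty}\*{K_{\pK}}$, so its image meets every coset of the open subgroup $\Delta$; as the quotient by an open subgroup is discrete, this image is all of $\big(\prod_{\pK|\infty}\*{K_{\pK}}\big)/\Delta$. Given $\vec\alpha\in J_K$ one then chooses $\beta\in\*K$ with $(\beta)_{\pK|\infty}\equiv(\alpha_{\pK})_{\pK|\infty}\bmod\Delta$, so that $\beta^{-1}\vec\alpha\in J_K^{(+)}$ and $\vec\alpha\in\*K J_K^{(+)}$. Combining the two isomorphisms finishes the proof.
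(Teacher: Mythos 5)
Your proof is correct, and it is essentially the argument the paper has in mind: the paper gives no proof here, deferring to Lemma 1.5 and Proposition 1.6 of Clement, whose standard proof is exactly your combination of the decomposition $J_K=\*K J_K^{(+)}$ (via weak approximation against the open subgroup $\Delta$), the second isomorphism theorem, and the divisor map restricted to the finite primes. All the individual steps check out, including the openness of $\Delta$ (it contains $\prod_{\pK|\infty}U_{\pK}^{(1)}$ because $n\mid q-1$) and the identification $J_K^{(+)}\cap\*K\big(\A\big)=K^{(+)}\big(\A\big)$.
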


Next we recall a result from class field theory.

\begin{proposition}\label{P2.9}
Let $E/F$ be a finite extension of global fields. Let $N$
be an open subgroup of finite index of $J_E$ such that $\*E
\subseteq N$. Let $E_N$ be the class field of $N$, that is,
$\N_{E_N/E}(J_{E_N})=N$. Let $F_0$ be the maximal abelian
extension of $F$ contained in $E_N$. Then the norm group
of $F_0$ in $J_F$ is $\*F \N_{E/F} (N)$.
\end{proposition}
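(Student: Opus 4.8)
The plan is to exhibit $M:=\*F\,\N_{E/F}(N)$ as a norm group and to identify the corresponding abelian extension of $F$ with $F_0$. First I would verify that $M$ is an open subgroup of finite index of $J_F$ containing $\*F$: it contains $\*F$ by construction, and since $N$ has finite index in $J_E$ while $\*F\,\N_{E/F}(J_E)$ (the norm group of the maximal abelian subextension of $E/F$) has finite index in $J_F$, a short index chase gives $[J_F:M]<\infty$, openness being routine from the openness of the local norm maps. By the existence theorem there is then a unique finite abelian extension $F'/F$ with $\N_{F'/F}(J_{F'})=M$, and it remains to prove $F'=F_0$.

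The key tool will be the base-change (translation) theorem: for every finite abelian extension $L/F$ the compositum $EL/E$ is abelian and
\[
\N_{EL/E}(J_{EL})=\N_{E/F}^{-1}\big(\N_{L/F}(J_L)\big)
=\{x\in J_E\mid \N_{E/F}(x)\in \N_{L/F}(J_L)\}.
\]
This is precisely the consequence of the compatibility of the Artin maps $(\underline{\ },EL/E)$ and $(\underline{\ },L/F)$ with $\N_{E/F}$ and with the injective restriction $\Gal(EL/E)\hookrightarrow\Gal(L/F)$, i.e. the same commutative diagram already used in the proof of Proposition \ref{P2.6}.

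I would then prove $F'=F_0$ by two inclusions. Taking $L=F'$ gives $\N_{EF'/E}(J_{EF'})=\N_{E/F}^{-1}(M)\supseteq N$ (because $\N_{E/F}(N)\subseteq M$), so by the inclusion-reversing correspondence $EF'\subseteq E_N$; hence $F'\subseteq E_N$, and as $F'/F$ is abelian the maximality of $F_0$ gives $F'\subseteq F_0$, i.e. $\N_{F_0/F}(J_{F_0})\subseteq M$. Conversely $F_0\subseteq E_N$ yields $EF_0\subseteq E_N$, whence $N\subseteq \N_{EF_0/E}(J_{EF_0})=\N_{E/F}^{-1}\big(\N_{F_0/F}(J_{F_0})\big)$ by the translation theorem applied to $L=F_0$; thus $\N_{E/F}(N)\subseteq \N_{F_0/F}(J_{F_0})$, and together with $\*F\subseteq \N_{F_0/F}(J_{F_0})$ this gives $M\subseteq \N_{F_0/F}(J_{F_0})$. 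The two inclusions yield $\N_{F_0/F}(J_{F_0})=M=\*F\,\N_{E/F}(N)$.

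The step I expect to be the main obstacle is securing the base-change formula and the existence theorem in the present function-field setting, where $J_F$ is non-compact and one must keep track of the degree map throughout; these facts are standard but must be quoted in the correct form from \cite{RzeVil2017}. Everything else is a formal manipulation of norm groups under the Galois correspondence.
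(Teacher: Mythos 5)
Your proof is correct and rests on the same mechanism as the paper's: the compatibility of the Artin maps with the id\`ele norm $\N_{E/F}$ (your ``translation theorem'' is exactly the commutative diagram the paper invokes), preceded by the same finite-index check on $\*F\N_{E/F}(N)$. If anything, your version is more complete: the paper only explicitly establishes the inclusion $\N_{E/F}(N)\subseteq\N_{F'/F}(J_{F'})$ for abelian $F'\subseteq E_N$, whereas you also carry out the converse direction (that the class field of $\*F\N_{E/F}(N)$ lands inside $E_N$, hence inside $F_0$), which the paper leaves implicit.
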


\begin{proof}
The norm $\N_{E/F}\colon J_E\lra J_F$ is an open map so
that $\N_{E/F}(N)$ is an open subgroup of $J_F$. Since $N$
is of finite index in $J_E$, we have that $\*F \N_{E/F}(N)$ is
of finite index in $J_F$.
\[
\xymatrix{&&&E_N\ar@{-}[dl]\ar@{<-->}[r]&N\subseteq J_E\\
E\ar@{-}[rr]\ar@{-}[d]&&F_0E\ar@{-}[d]\\
F\ar@{-}[rr]&&F_0\ar@{<-->}[r]&\*F\N_{E/F}(N)
}
\]

To show that $F_0$ is the class field of $\*F\N_{E/F}(N)$, it
suffices to show that the group $\N_{E/F}(N)$ is
contained in any open subgroup
of finite index in $J_F$ such that its class field $F'$ is contained in
$E_N$.

Let $N'$ be an open subgroup of finite index in $J_E$ such that
its class field is $EF'\subseteq E_N$. Since $N\subseteq N'$,
it follows that $\N_{E/F}(N)
\subseteq \N_{E/F}(N')$. We have the commutative diagram
(see \cite[Teorema 17.6.137]{RzeVil2017})
\[
\xymatrix{
J_E\ar@{->}[rrr]^{(\underline{\ },EF'/E)}\ar@{->}[d]_{\N_{E/F}}
&&&\Gal(EF'/E)\ar@{->}[d]^{\rest}\\
J_F\ar@{->}[rrr]_{(\underline{\ },F'/F)}&&&\Gal(F'/F)
}
\]
Therefore $\N_{E/F}(N)$ is contained in the subgroup of $J_F$
corresponding to $F'$.
\end{proof}

As a direct consequence of Proposition \ref{P2.9} we have

\begin{corollary}\label{C2.10}
The extended genus field $\Gamma$ of $K/k$ is the class field
of $\*k\N_{K/k}\big(\Delta\times\prod_{\pK\nmid\infty}U_{\pK}\big)$
and 
\begin{gather*}
[\Gamma:k]=|\Gal(\Gamma/k)|=\big[J_k:\*k\N_{K/k}\big(\Delta\times
\prod_{\pK\nmid \infty}U_{\pK}\big)\big]. \tag*{$\fin$}
\end{gather*}
\end{corollary}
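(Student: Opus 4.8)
The plan is to obtain the statement as an immediate application of Proposition \ref{P2.9} to the extension $K/k$, followed by a simplification of the resulting norm group and an appeal to the reciprocity isomorphism. First I would take $E=K$, $F=k$ and set $N:=\*K\big(\A\big)$. By Proposition \ref{P2.2} and the remark following it, $N$ is an open subgroup of finite index of $J_K$, and clearly $\*K\subseteq N$, so the hypotheses of Proposition \ref{P2.9} hold. By Corollary \ref{C2.4} the class field $E_N$ attached to $N$ is precisely $K_{H^+}$, and by Definition \ref{D2.1} the maximal abelian extension $F_0$ of $k$ contained in $K_{H^+}$ is exactly the extended genus field $\Gamma$. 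Proposition \ref{P2.9} then identifies $\Gamma$ as the class field of $\*k\,\N_{K/k}(N)$ in $J_k$.

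The next step is to simplify this norm group into the form asserted in the statement. Since $\N_{K/k}$ is a homomorphism, $\N_{K/k}(N)=\N_{K/k}(\*K)\,\N_{K/k}\big(\A\big)$. Because the id\`elic norm is compatible with the diagonal embeddings, the norm of a principal id\`ele of $K$ is a principal id\`ele of $k$, so $\N_{K/k}(\*K)\subseteq \*k$. Hence $\*k\,\N_{K/k}(N)=\*k\,\N_{K/k}\big(\A\big)$, which proves the first assertion, namely that $\Gamma$ is the class field of $\*k\,\N_{K/k}\big(\A\big)$.

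Finally, for the index formula I would invoke the fundamental theorem of global class field theory. The extension $\Gamma/k$ is abelian, hence Galois, so $[\Gamma:k]=|\Gal(\Gamma/k)|$; and the reciprocity isomorphism for the class field of $\*k\,\N_{K/k}\big(\A\big)$ gives $\Gal(\Gamma/k)\cong J_k/\big(\*k\,\N_{K/k}\big(\A\big)\big)$, whence $[\Gamma:k]=\big[J_k:\*k\,\N_{K/k}\big(\A\big)\big]$.

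I expect no serious obstacle, as the corollary is essentially bookkeeping once Proposition \ref{P2.9} is in hand. The single point requiring a word of care is the reduction $\*k\,\N_{K/k}(\*K)=\*k$, that is, verifying that the global principal factor $\*K$ of $N$ contributes nothing after taking norms; this rests on the standard fact that the id\`elic norm carries diagonal principal id\`eles of $K$ to diagonal principal id\`eles of $k$.
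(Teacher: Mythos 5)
Your proposal is correct and matches the paper's intended argument: the paper states Corollary \ref{C2.10} as a direct consequence of Proposition \ref{P2.9}, applied exactly as you do with $E=K$, $F=k$, $N=\*K\big(\A\big)$, $E_N=K_{H^+}$ (via Corollary \ref{C2.4}) and $F_0=\Gamma$. Your extra care in reducing $\*k\,\N_{K/k}(\*K)\,\N_{K/k}\big(\A\big)$ to $\*k\,\N_{K/k}\big(\A\big)$ is a correct and welcome detail that the paper leaves implicit.
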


To obtain $\Gamma$, we have to compute $\N_{K/k}\big(\Delta\times
\prod_{\pK\nmid \infty}U_{\pK}\big)$.

First we prove two lemmas.

\begin{lemma}\label{L2.11}
We have $J_k=\*k\big(\*{k_{\infty}}\times \prod_{P\in R_T^+} U_{P}\big)$.
\end{lemma}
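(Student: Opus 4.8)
The plan is to exploit the fact that the ring $R_T=\F[T]$ is a principal ideal domain, which is the function-field analogue of $\mathbb{Q}$ having class number one. The inclusion $\*k\big(\*{k_{\infty}}\times\prod_{P\in R_T^+}U_P\big)\subseteq J_k$ is immediate, so the entire content lies in the reverse inclusion, and I would concentrate on that.

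First I would take an arbitrary id\`ele $\vec\alpha=(\alpha_v)_v\in J_k$. For each finite prime $P\in R_T^+$ set $n_P:=v_P(\alpha_P)\in{\mathbb Z}$; by the very definition of an id\`ele one has $n_P=0$ for all but finitely many $P$, so the product $\beta:=\prod_{P\in R_T^+}P^{n_P}$ is a well-defined nonzero element of $\*k$.

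The key step is to verify that $\beta$ has exactly the valuations needed to cancel the finite part of $\vec\alpha$. Since distinct monic irreducibles are coprime in the unique factorization domain $R_T$, the valuation of $\beta$ at a finite prime $P_0$ equals precisely $n_{P_0}$, while $\beta$ imposes no constraint at $\infty$ (where every element of $\*{k_{\infty}}$ is allowed). Hence for the corrected id\`ele $\beta^{-1}\vec\alpha$ one finds $v_P(\beta^{-1}\alpha_P)=-n_P+n_P=0$ for every finite $P$, so its $P$-component lies in $U_P$, while its $\infty$-component lies in $\*{k_{\infty}}$ with no further restriction. Thus $\beta^{-1}\vec\alpha\in\*{k_{\infty}}\times\prod_{P\in R_T^+}U_P$, and therefore $\vec\alpha=\beta\cdot(\beta^{-1}\vec\alpha)\in\*k\big(\*{k_{\infty}}\times\prod_{P\in R_T^+}U_P\big)$, which is the reverse inclusion.

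There is no serious analytic obstacle here; the only point requiring justification is that $\beta$ can be chosen in $\*k$ realizing the prescribed finite valuations, and this is exactly the assertion that every fractional ideal of $R_T$ is principal. I would flag this as the place where the geometry of the rational function field is used in an essential way: for a more general base the finite divisor class group need not be trivial, and in that situation one would obtain only a decomposition into finitely many cosets rather than the clean equality above.
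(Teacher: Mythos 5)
Your argument is correct and is essentially identical to the paper's (first) proof: the authors likewise correct the id\`ele by the element $f=\prod Q_i^{c_i}$ built from the finite valuations, reducing everything to the fact that $R_T$ is a PID. (The paper also sketches a second, class-field-theoretic proof via the triviality of the corresponding unramified extension, but your elementary route matches its main argument.)
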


\begin{proof}
Let $\vec\beta=\big(\beta_{\infty},\beta_P\big)_{P\in R_T^+}\in J_k$.
Let $Q_1,\ldots,Q_t\in R_T^+$ be the finite primes such that $c_i=v_{Q_i}(
\beta_{Q_i})\neq 0$. We have that $v_P(\beta_P)= 0$ for all $P\notin
\{Q_1,\ldots,Q_t\}$. Let $f\in \*k$ be defined by $f=\prod_{i=1}^t Q_i^{c_i}$
($f=1$ if $t=0$). Then $f^{-1}\vec\beta\in\*{k_{\infty}}\times \prod_{P
\in R_T^+} U_P$. The result is obtained.

Another proof is as follows, the class field $F$ of $\*k\big(\*{k_{\infty}}\times \prod_{P
\in R_T^+} U_P\big)$ is unramified over $k$ and $\p$ decomposes fully. Hence $F$
is an extension of constants and the infinite prime (of degree one) decomposes
fully in $F$. Therefore $F=k$.
\end{proof}

\begin{lemma}\label{L2.12} Let $\pK$ be a finite prime in $K$. Let $P$
be the irreducible polynomial in $R_T$ corresponding to the prime in
$k$ below $\pK$.
\las
\item If $\pK$ is unramified, then $\N_{K_{\pK}/k_P}(U_{\pK})=U_P$.
\item If $\pK$ is ramified with ramification index $e_P$, then
$\N_{K_{\pK}/k_P}(U_{\pK})={\mathbb F}_P^{*e_P}\times U_P^{(1)}$,
where ${\mathbb F}_P$ is the field of constants of $k_P$.
\end{list}
\end{lemma}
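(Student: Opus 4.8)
The plan is to reduce both statements to the structure of the local unit groups together with a single index computation, exploiting that everything is tame. Since $n\mid q-1$, the local extension $K_{\pK}/k_P$ has ramification index $e_P$ dividing $n$, hence prime to the characteristic $p$, and it is abelian because $\Gal(K_{\pK}/k_P)$ is a subgroup of the abelian group $\Gal(K/k)$. I would first fix the Teichm\"uller decompositions $U_P=\mathbb{F}_P^*\times U_P^{(1)}$ and $U_{\pK}=\mathbb{F}_{\pK}^*\times U_{\pK}^{(1)}$, where $\mathbb{F}_{\pK}$ denotes the residue field of $K_{\pK}$ and $\mathbb{F}_P$ that of $k_P$; these exist because $p$ is prime to $|\mathbb{F}_P^*|$.

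The key computational input is the behaviour of the norm modulo the maximal ideal. Writing $K_{\pK}/k_P$ as a tower $k_P\subseteq T\subseteq K_{\pK}$ with $T/k_P$ the maximal unramified subextension (of residue degree $f$) and $K_{\pK}/T$ totally tamely ramified of degree $e_P$, and using that $K_{\pK}/T$ is abelian with Galois group acting trivially on residues, I would establish the reduction formula $\overline{\N_{K_{\pK}/k_P}(u)}=\N_{\mathbb{F}_{\pK}/\mathbb{F}_P}(\bar u)^{e_P}$ for every $u\in U_{\pK}$. Because the finite-field norm $\N_{\mathbb{F}_{\pK}/\mathbb{F}_P}\colon \mathbb{F}_{\pK}^*\to\mathbb{F}_P^*$ is surjective, this shows that the image of $\N_{K_{\pK}/k_P}(U_{\pK})$ under reduction $U_P\to\mathbb{F}_P^*$ is exactly $\mathbb{F}_P^{*e_P}$. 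Moreover, since $e_P\mid n\mid q-1$ and $q-1\mid q^{\deg P}-1=|\mathbb{F}_P^*|$, the subgroup $\mathbb{F}_P^{*e_P}$ has index precisely $e_P$ in $\mathbb{F}_P^*$.

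To pin down the principal part I would invoke the local norm index $[U_P:\N_{K_{\pK}/k_P}(U_{\pK})]=e_P$, which holds because the reciprocity isomorphism $k_P^*/\N_{K_{\pK}/k_P}(K_{\pK}^*)\xrightarrow{\ \cong\ }\Gal(K_{\pK}/k_P)$ carries $U_P/\N(U_{\pK})$ onto the inertia subgroup, of order $e_P$ (one checks $U_P\cap\N(K_{\pK}^*)=\N(U_{\pK})$ by valuations). Comparing indices, the containment $\N(U_{\pK})\subseteq \mathbb{F}_P^{*e_P}\times U_P^{(1)}$ together with the fact that both groups have index $e_P$ in $U_P$ forces $\N(U_{\pK})=\mathbb{F}_P^{*e_P}\times U_P^{(1)}$, and in particular $U_P^{(1)}\subseteq \N(U_{\pK})$. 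This proves (2); statement (1) is then the special case $e_P=1$, where the formula collapses to $\N(U_{\pK})=\mathbb{F}_P^*\times U_P^{(1)}=U_P$.

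I expect the main obstacle to be the surjectivity of the norm on principal units, that is, the inclusion $U_P^{(1)}\subseteq \N(U_{\pK})$. The index-counting shortcut above sidesteps a direct proof, but if one wishes to avoid local class field theory one must instead argue by the filtration method: the norm maps $U_{\pK}^{(i)}$ into a suitable $U_P^{(j)}$ and induces on the graded quotients $U^{(i)}/U^{(i+1)}\cong\mathbb{F}$ either the surjective trace map in the unramified direction or a bijection in the tame totally ramified direction, after which completeness of $k_P$ yields surjectivity by successive approximation. Keeping track of the filtration jumps in the ramified tame case is the delicate point, and it is exactly what the index computation lets us circumvent.
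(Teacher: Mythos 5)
Your proof is correct, and it rests on the same central input as the paper's: the local norm--index formula $[U_P:\N_{K_{\pK}/k_P}(U_{\pK})]=e_P$, which the paper quotes as \cite[Corolario 17.3.39]{RzeVil2017} and which you rederive from the reciprocity isomorphism (your verification that $U_P\cap\N(K_{\pK}^*)=\N(U_{\pK})$ by valuations is the right way to pass from the full norm group to the unit norm group). Where you diverge is in how the index-$e_P$ subgroup is identified. The paper pins it down \emph{from below}: it invokes tame surjectivity on principal units, $\N_{K_{\pK}/k_P}(U_{\pK}^{(1)})=U_P^{(1)}$, so that $\N(U_{\pK})$ corresponds to an index-$e_P$ subgroup of the cyclic group $U_P/U_P^{(1)}\cong{\mathbb F}_P^*$, necessarily ${\mathbb F}_P^{*e_P}$. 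You pin it down \emph{from above}, via the residue formula $\overline{\N(u)}=\N_{{\mathbb F}_{\pK}/{\mathbb F}_P}(\bar u)^{e_P}$ (correctly derived through the maximal unramified subextension, using that the totally ramified layer acts trivially on residues), which yields $\N(U_{\pK})\subseteq{\mathbb F}_P^{*e_P}\times U_P^{(1)}$, and you then squeeze by comparing indices. Your route buys you the surjectivity of the norm on principal units as a \emph{consequence} rather than an input, and it does not need the uniqueness of index-$e_P$ subgroups of a cyclic group; the paper's route is shorter given the two facts it cites. Both arguments need $e_P\mid q-1$ --- yours to see that ${\mathbb F}_P^{*e_P}$ has index exactly $e_P$ in ${\mathbb F}_P^*$, the paper's to know such a subgroup exists and is unique --- and both need $K_{\pK}/k_P$ abelian for the norm-index formula, which holds since $K/k$ is Kummer.
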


\begin{proof}
For any extension $E/F$ of local fields, we have $e_{E/F}=[U_F:\N_{E/F}
(U_E)]$ (see \cite[Corolario 17.3.39]{RzeVil2017}). If $E/F$ is tamely 
ramified, $\N_{E/F}(U_E^{(1)})=U_F^{(1)}$. The result follows.
\end{proof}

\begin{proposition}\label{P2.13}
We have
\[
[\Gamma:k]=\big[J_k:\*k\N_{K/k}\big(\Delta\times\prod_{\pK\nmid\infty}U_{\pK}\big)
\big]=n\prod_{i=1}^r e_i
\]
where $e_i$ is the ramification index of $P_i$ in $K/k$, $1\leq i\leq r$.
\end{proposition}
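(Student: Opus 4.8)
The plan is to begin from Corollary \ref{C2.10}, which identifies $[\Gamma:k]$ with the idèle index $[J_k:\*k\,{\mc N}]$, where I write ${\mc N}:=\N_{K/k}\big(\Delta\times\prod_{\pK\nmid\infty}U_{\pK}\big)$. The first task is to describe ${\mc N}$ one place of $k$ at a time. Because the norm of an idèle satisfies $(\N_{K/k}\vec\alpha)_P=\prod_{\pK\mid P}\N_{K_{\pK}/k_P}(\alpha_{\pK})$ and the defining set $\Delta\times\prod_{\pK\nmid\infty}U_{\pK}$ is a restricted product whose only coupling is among the infinite places (through $\Delta$), the image ${\mc N}$ is again a restricted product over the places of $k$. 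At $\infty$ the product-of-norms map carries $\Delta$ onto $k_{\infty}^{*n}$, since by the proof of Proposition \ref{P2.5} the full image $\prod_{\pK\mid\infty}\N_{K_{\pK}/k_{\infty}}\*{K_{\pK}}$ already equals $k_{\infty}^{*n}$ and $\Delta$ is its preimage. At a finite prime $P$ unramified in $K$, Lemma \ref{L2.12}(1) gives local factor $U_P$; at a ramified $P_i$ (all primes of $K$ above $P_i$ share the index $e_i$, as $K/k$ is Galois), Lemma \ref{L2.12}(2) gives local factor ${\mathbb F}_{P_i}^{*e_i}\times U_{P_i}^{(1)}$. Hence
\[
{\mc N}=k_{\infty}^{*n}\times\prod_{i=1}^r\big({\mathbb F}_{P_i}^{*e_i}\times U_{P_i}^{(1)}\big)\times\prod_{P\text{ unram.}}U_P.
\]

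Next I would use Lemma \ref{L2.11} to write $J_k=\*k\,M$ with $M:=\*{k_{\infty}}\times\prod_{P\in R_T^+}U_P$. Each local factor of ${\mc N}$ sits inside the corresponding factor of $M$ (in particular ${\mathbb F}_{P_i}^{*e_i}\times U_{P_i}^{(1)}\subseteq U_{P_i}$), so ${\mc N}\subseteq M$. Feeding the composite $M\hookrightarrow J_k\twoheadrightarrow J_k/\*k\,{\mc N}$ into the second isomorphism theorem, and using ${\mc N}\subseteq M$ to identify its kernel as $(\*k\cap M)\,{\mc N}$, I obtain
\[
[J_k:\*k\,{\mc N}]=\big[M:(\*k\cap M)\,{\mc N}\big]=\frac{[M:{\mc N}]}{\big[(\*k\cap M)\,{\mc N}:{\mc N}\big]}.
\]
Here $\*k\cap M$ is the group of rational functions that are units at every finite prime; by the product formula such a function is a unit at $\infty$ as well, hence a nonzero constant, so $\*k\cap M=\f$ embedded diagonally.

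It then remains to evaluate the two indices. The numerator splits as a product of local indices: $[\*{k_{\infty}}:k_{\infty}^{*n}]=n^2$ (Proposition \ref{P2.5}), each unramified factor contributes $1$, and each ramified factor contributes $[U_{P_i}:{\mathbb F}_{P_i}^{*e_i}\times U_{P_i}^{(1)}]=[{\mathbb F}_{P_i}^*:{\mathbb F}_{P_i}^{*e_i}]=e_i$, the final equality holding because $e_i\mid n\mid q-1\mid q^{\deg P_i}-1$. Thus $[M:{\mc N}]=n^2\prod_{i=1}^r e_i$. For the denominator, $\big[(\*k\cap M)\,{\mc N}:{\mc N}\big]=[\f:\f\cap{\mc N}]$, and the crux is to show $\f\cap{\mc N}={\f}^{n}$: a constant lies in the infinite component $k_{\infty}^{*n}$ exactly when it lies in ${\f}^{n}$, giving $\f\cap{\mc N}\subseteq{\f}^{n}$, while conversely $e_i\mid n$ yields ${\f}^{n}\subseteq{\f}^{e_i}\subseteq{\mathbb F}_{P_i}^{*e_i}$ at each ramified place and the unramified places impose nothing, so ${\f}^{n}$ lies diagonally in ${\mc N}$. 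Therefore $[\f:\f\cap{\mc N}]=[\f:{\f}^{n}]=n$, and dividing gives $[\Gamma:k]=n^2\prod_{i=1}^r e_i\,/\,n=n\prod_{i=1}^r e_i$. I expect the diagonal intersection $\f\cap{\mc N}={\f}^{n}$ to be the main obstacle: one must verify simultaneously that the place $\infty$ pins the global constants down to $n$-th powers and that these $n$-th powers automatically meet the weaker $e_i$-th power conditions coming from the ramified finite primes.
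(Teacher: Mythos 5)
Your argument is correct and follows essentially the same route as the paper: reduce via Lemma \ref{L2.11} to an index of the form $[\*kC:\*kD]$ with $C=\*{k_{\infty}}\times\prod_P U_P$ and $D=\N_{K/k}\big(\Delta\times\prod_{\pK\nmid\infty}U_{\pK}\big)$, compute $[C:D]=n^2\prod_{i=1}^r e_i$ from Lemma \ref{L2.12} together with $[\*{k_{\infty}}:k_{\infty}^{*n}]=n^2$, and divide by $[\*k\cap C:\*k\cap D]=[\f:{\f}^n]=n$. Your explicit verification that the diagonal intersection $\f\cap{\mc N}$ equals ${\f}^n$ (and that the image of $\Delta$ under the product of local norms is $k_{\infty}^{*n}$) just spells out steps the paper states without comment.
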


\begin{proof}
From Lemmas \ref{L2.11} and \ref{L2.12}, we obtain
\begin{align*}
&\big[J_k:\*k\N_{K/k}\big(\A\big)\big]\\ 
&=\Big[\*k\big(\A\big):\*k\Big(k_{\infty}^{*n}
\times\prod_{i=1}^r \big({\mathbb F}_{P_i}^{*e_i}\times U_{P_i}^{(1)}\big)
\times \prod_{\substack{P\in R_T^+\\ P\notin\{P_1,\ldots,P_r\}}}U_P\Big)\Big].
\end{align*}
Set $C:=\A$ and $D:=k_{\infty}^{*n}
\times\prod_{i=1}^r\big({\mathbb F}_{P_i}^{*e_i}\times U_{P_i}^{(1)}\big)
\times \prod_{\substack{P\in R_T^+\\ P\notin\{P_1,\ldots,P_r\}}}U_P$.

Then,
\begin{gather*}
[\*kC:\*kD]=\frac{[C:D]}{[\*k\cap C:\*k \cap D]}
\intertext{and}
\frac CD\cong \frac {\*{k_{\infty}}}{k_{\infty}^{*n}}\times
\prod_{i=1}^r\frac{{\mathbb
F}_{P_i}^{*}\times U_{P_i}^{(1)}}
{{\mathbb F}_{P_i}^{*e_i}\times U_{P_i}^{(1)}}
\times\prod_{\substack{P\in R_T^+\\ P\notin\{P_1,\ldots,P_r\}}}\frac{U_P}{U_P}\cong
C_n^2\times \prod_{i=1}^r C_{e_i}.
\end{gather*}

Consider now $\vec\alpha\in\*k\cap C$. Then $\alpha_P\in U_P$ for all $P\in
R_T^+$. Thus $v_P(\alpha_P)=0$ for all $P\in R_T^+$. It follows that
$v_{\infty}(\alpha_{\infty})=0$ and that $\vec\alpha=x\in \f$.
Thus $\*k\cap C=\f$. Similarly $\*k\cap D=
{\f}^n$. Therefore $[\*k\cap C:\*k\cap D]=n$. It follows that $[\*k C:\*k D]=\frac{
n^2\times\prod_{i=1}^r e_i}{n}=n\cdot\prod_{i=1}^r e_i$.
\end{proof}

We have the main result of this note.

\begin{theorem}\label{T2.14}
Let $K$ be given by {\rm{(\ref{Ec1})}}. Then the extended genus field of $K/k$
is 
\[
\Gamma={\mathbb F}_{q^n}\big(T, \sqrt[e_1]{P_1},\sqrt[e_2]{P_2},
\ldots, \sqrt[e_r]{P_r}\big).
\]
\end{theorem}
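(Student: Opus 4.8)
The plan is to identify the extended genus field $\Gamma$ by combining the degree computation from Proposition~\ref{P2.13} with an explicit construction of a candidate field and then verifying the two coincide. First I would set
\[
M:={\mathbb F}_{q^n}\big(T, \sqrt[e_1]{P_1},\ldots,\sqrt[e_r]{P_r}\big)
\]
and argue that $M$ is an abelian extension of $k$: the constant extension ${\mathbb F}_{q^n}/{\mathbb F}_q$ is cyclic (hence abelian), and each $k(\sqrt[e_i]{P_i})/k$ is a cyclic Kummer extension of degree $e_i$ since $e_i\mid n\mid q-1$ guarantees the requisite roots of unity lie in ${\mathbb F}_q\subseteq k$. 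A compositum of abelian extensions is abelian, so $\Gal(M/k)$ is abelian. Moreover its exponent divides $n$, since the constant part contributes a cyclic factor of order dividing $n$ and each Kummer factor has order $e_i\mid n$.

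Next I would verify that $M\subseteq K$ up to the constant field and that $M/K$ has the two defining properties of a subfield of $K_{H^+}$, so that Proposition~\ref{P2.6} applies. The key point is that $MK/K$ is unramified at the finite primes. For the constant extension ${\mathbb F}_{q^n}$ this is automatic, as extensions of constants are everywhere unramified. For each $\sqrt[e_i]{P_i}$, the prime $P_i$ already ramifies in $K$ with ramification index exactly $e_i$, so adjoining $\sqrt[e_i]{P_i}$ introduces no \emph{new} finite ramification over $K$: the ramification at $P_i$ is already absorbed, and at every other finite prime $k(\sqrt[e_i]{P_i})/k$ is unramified (the only finite prime ramifying in $k(\sqrt[e_i]{P_i})$ is $P_i$ itself). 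This is the step I expect to be the main obstacle, since one must check carefully that the ramification index of $P_i$ in $K$ is \emph{exactly} $e_i$ and that no finite prime outside $\{P_1,\ldots,P_r\}$ ramifies in $MK/K$; here the hypothesis $n\mid q-1$ (tameness) is essential. Granting this, $MK/K$ is abelian, unramified at the finite primes, with $\Gal(MK/k)$ of exponent dividing $n$, so by Proposition~\ref{P2.6} we get $MK\subseteq K_{H^+}$, whence $M\subseteq\Gamma$ by the definition of $\Gamma$ as the maximal abelian-over-$k$ subextension.

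Finally I would close the argument by a degree count. On the one hand, by Corollary~\ref{C2.10} and Proposition~\ref{P2.13} we have $[\Gamma:k]=n\prod_{i=1}^r e_i$. On the other hand, I would compute $[M:k]$ directly: the constant extension contributes a factor $n$, and the radical tower $k(\sqrt[e_1]{P_1},\ldots,\sqrt[e_r]{P_r})$ contributes $\prod_{i=1}^r e_i$, provided the radicals are multiplicatively independent. The latter independence follows because the $P_i$ are distinct monic irreducibles, so their classes in $k^*/(k^*)^{n}$ together with the constant extension generate a subgroup of the expected order $n\prod_{i=1}^r e_i$; here one uses Kummer theory together with the fact that the constant extension is linearly disjoint from the geometric radical part. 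Thus $[M:k]=n\prod_{i=1}^r e_i=[\Gamma:k]$, and combined with $M\subseteq\Gamma$ this forces $M=\Gamma$, as claimed.
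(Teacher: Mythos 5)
Your proposal is correct and follows essentially the same route as the paper: define $M={\mathbb F}_{q^n}\big(T,\sqrt[e_1]{P_1},\ldots,\sqrt[e_r]{P_r}\big)$, deduce $M\subseteq K_{H^+}$ via Proposition~\ref{P2.6} and hence $M\subseteq\Gamma$ by maximality, then conclude $M=\Gamma$ from the degree count of Proposition~\ref{P2.13}. You merely spell out the verification of the hypotheses of Proposition~\ref{P2.6} (that $MK/K$ is unramified at the finite primes and that the exponent of $\Gal(MK/k)$ divides $n$), which the paper leaves implicit.
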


\begin{proof}
Let $M={\mathbb F}_{q^n}\big(T, \sqrt[e_1]{P_1},\sqrt[e_2]{P_2},
\ldots, \sqrt[e_r]{P_r}\big)$. Then $[M:k]=n\cdot\prod_{i=1}^r e_i$.
From Proposition \ref{P2.13} we obtain that $[\Gamma:k]=[M:k]$ and
from Proposition \ref{P2.6} we obtain that $M\subseteq K_{H^+}$.
Since $M$ is abelian and $\Gamma$ is the maximal abelian 
extension of $k$ contained in $K_{H^+}$, it follows that
$M\subseteq \Gamma$. Hence $\Gamma=M$.
\end{proof}

\begin{remark}\label{R2.15}{\rm{
We observe that the extended genus field $\Gamma$ of a Kummer
extension $K/k$ is also a Kummer extension and furthermore that the
extended genus field $\Gamma/k$ is $\Gamma$ itself.
}}
\end{remark}

\section{Comparison among various extended genus fields}

Let $K$ be given by (\ref{Ec1}). We have some different definitions of extended
genus fields. We compare three of them.

First we denote by $K_{\mathfrak{gex},\text{clement}}$ the extended genus
field obtained here, that is, $K_{\mathfrak{gex},\text{clement}}=\Gamma=
{\mathbb F}_{q^n}\big(T, \sqrt[e_1]{P_1},\sqrt[e_2]{P_2},
\ldots, \sqrt[e_r]{P_r}\big)$.

In \cite{RaRzVi2019} the extended genus field of $K$ was defined as
$K_{\mathfrak{gex},\text{rarzvi}}=E_{\mathfrak{gex},\text{rarzvi}}K$,
where we have 
$E=k\big(\sqrt[n_1]{(-1)^{\deg D_1}D_1},\sqrt[n_2]{(-1)^{\deg D_2}
D_2},\ldots, \sqrt[n_1]{(-1)^{\deg D_s}D_s}\big)$ and then 
\begin{gather*}
E_{\mathfrak{gex},
\text{rarzvi}}=k\big(\sqrt[e_1]{(-1)^{\deg P_1}P_1},\sqrt[e_2]{(-1)^{\deg P_2}P_2},
\ldots,\sqrt[e_r]{(-1)^{\deg P_r}P_r}\big).
\intertext{Therefore}
K_{\mathfrak{gex},
\text{rarzvi}}=\F\big(T,\sqrt[e_1]{\epsilon_1},\sqrt[e_2]{\epsilon_2},\ldots,
\sqrt[e_r]{\epsilon_r}\big)\big(\sqrt[e_1]{P_1},\sqrt[e_2]{P_2},\ldots,
\sqrt[e_r]{P_r}\big),
\end{gather*}
where $\epsilon_i=(-1)^{\deg P_i}\gamma_i$, $1\leq
i\leq r$. Since $e_i|n$ we have $\F\big(\sqrt[e_1]{\epsilon_1},
\sqrt[e_2]{\epsilon_2},\ldots, \sqrt[e_r]{\epsilon_r}\big)\subseteq
{\mathbb F}_{q^n}$. Therefore $K_{\mathfrak{gex},\text{rarzvi}}\subseteq
K_{\mathfrak{gex},\text{clement}}$.

Next, using the definition of extended genus field given by Angl\`es and Jaulent
in \cite{AnJa2000}, it was obtained in \cite{RzeVil2021} that 
$K_{\mathfrak{gex},\text{angjau}} =FK$ for some subfield of $E_{\mathfrak{gex},
\text{rarzvi}}$. Therefore $K_{\mathfrak{gex},\text{angjau}}\subseteq
K_{\mathfrak{gex},\text{rarzvi}}$.

Finally we have
\[
K_{\mathfrak{gex},\text{angjau}}\subseteq K_{\mathfrak{gex},\text{rarzvi}}
\subseteq K_{\mathfrak{gex},\text{clement}}.
\]

We also remark that while $K_{\mathfrak{gex},\text{clement}}$ is defined just for 
Kummer extensions $K/k$, $K_{\mathfrak{gex},\text{rarzvi}}$ and
$K_{\mathfrak{gex},\text{angjau}}$ are defined for any 
separable finite extension $K/k$.

\begin{examples}\label{Ex5.1}{\rm{
\las
\item Let $K:=k(\sqrt[l]{\gamma P})$ where $l$ is a prime number such that
$l|q-1$, $P\in R_T^+$ with $\deg P<l$ and $\gamma\in \f\setminus (\f)^l$.
Then $K_{\mathfrak{gex},\text{angjau}}=
K_{\mathfrak{gex},\text{rarzvi}}$ (see \cite[Example 5.2]{RzeVil2021}).

\item Let $l$ be a prime number such that $l^2|q-1$ and let $K=k\big(
\sqrt[l^2]{\gamma D}\big)$ where $\gamma \in \f$, $D=P_1^{\alpha_1}
\cdots P_r^{\alpha_r}\in R_T$. Then $K_{\mathfrak{gex},\text{angjau}}=
K_{\mathfrak{gex},\text{rarzvi}}$. (see \cite[Example 5.3]{RzeVil2021}).

\item Now, let $l$ be a prime number such that $l|q-1$ and let 
$K:=k(\sqrt[l]{(-1)^{\deg P} P})$ where $P\in R_T^+$
and $\deg P$ denotes the degree of $P$. Then
$K_{\mathfrak{gex},\text{rarzvi}}=K\subsetneqq K_l=
K_{\mathfrak{gex},\text{clement}}$, where $K_l=K
{\mathbb F}_{q^l}$ denotes
the extension of constants of degree $l$ of $K$.
\end{list}
}}
\end{examples}

\begin{remark}\label{R5.2}{\rm{It is very likely that, when $K/k$
is abelian, we have $K_{\mathfrak{gex},\text{angjau}}=
K_{\mathfrak{gex},\text{rarzvi}}$.
}}
\end{remark}

\end{document}